\newcommand{\e}{\varepsilon}
\numberwithin{equation}{section}
\newtheorem{Prop}{\bf Proposition}[section]
\newtheorem{defn}{\bf Definition}[section]
\newtheorem{Rem}{\bf Remark}[section]
\newtheorem{Ex}{\bf Example}[section]
\newtheorem{Th}{Theorem}[section]
\begin{document}
\def \b{\Box}

\begin{center}
{\Large {\bf ON TRANSITIVE GROUP$-$GROUPOIDS}}
\end{center}

\begin{center}
{\bf Gheorghe IVAN}
\end{center}

\setcounter{page}{1}

\pagestyle{myheadings}

{\small {\bf Abstract}. The purpose of  this paper is to study the
transitive group-groupoids. }
{\footnote{{\it AMS classification:} 20L13, 20L99.\\
{\it Key words and phrases:} groupoid, group-groupoid, transitive
group-groupoid.}}

\section{Introduction}
\indent\indent The groupoid was introduced by H. Brandt [Math.
Ann., \textbf{96}(1926), 360-366] and it is developed by P. J.
Higgins in \cite{higgi}. The notion of group-groupoid was defined
by R. Brown and Spencer in \cite{brspen}. In \cite{mivan} is given
a new definition for group-groupoids.

The groupoids and group-groupoids have important applications in
various areas of science (see for instance \cite{brown06},
\cite{icozgu}$-$\cite{ivan99},\cite{mack}$-$\cite{rare}).

 The paper is organized as follows. In Section 2  we present some basic facts
 related to group-groupoids. Section 3 is dedicated to study of transitive group-groupoids.
 A main result for to describe the commutative transitive group-groupoids is established.

\section{Basic properties of group-groupoids}
\indent\indent We start this section with a brief review of some
necessary backgrounds on groupoids \cite{higgi, brown, mack}. In
the terms of category theory,  a groupoid is a small category in
which every morphism is an isomorphism. In the following
definition we describe the concept of groupoid as algebraic
structure.
\begin{defn}
{\rm (\cite{mack})  A {\it groupoid $ G$ over} $ G_{0}$ is a pair
$( G, G_{0} )$ of nonempty sets endowed with two surjective maps
$\alpha, \beta:G \to G_{0}$, a partially binary operation $ m $
from $ G_{(2)}= \{ (x,y)\in G\times G | \beta(x) = \alpha(y) \} $
to $~G, $  an injective map $ \e: G_{0} \to G $ and a map $ i: G
\to  G $ satisfying the following properties (we write
$x\cdot y $ or $ x y$ for $ m(x,y) $ and $ x^{-1} $ for $ i(x)$):\\
$(G1)~$ ({\it associativity})$~~(xy)z=x(yz)~$ in the sense that if
one of two products $ (xy)z $ and $ x(yz) $ is defined,
then the other product is also defined and they are equals;\\
$(G2)~$ ({\it identities}) $~(\e(\alpha(x)),~(x,\e(\beta(x)))\in
G_{(2)} $ and
 $ \e(\alpha(x)) x= x= x \e(\beta(x);$\\
$(G3)~$ ({\it inverses}) $~(x^{-1},x),~(x,x^{-1}) \in G_{(2)} $
 and $ x^{-1} x = \e(\beta(x)),~x
x^{-1}=\e(\alpha(x)).$}\hfill$\Box$
\end{defn}
 A groupoid $ G $ over $ G_{0} $  with the \textit{structure functions} $ \alpha
$ ({\it source}), $ \beta $ ({\it target}), $ m $ ({\it
multiplication}), $ \varepsilon $ ({\it inclusion})  and $ i$
({\it inversion}), is denoted by $ (G, \alpha, \beta, m, \e, i, G
_0)$. Whenever we write a product in a given groupoid, we are
assuming that it is defined.

For a groupoid we sometimes write $ (G, \alpha, \beta, G_{0} ) $
or $ (G, G_{0} ) $; $ G_{0} $ is called the {\it base} of $ G $
and $ \e(G_{0})\subseteq G $ is called the {\it unit set} of $ G.
$ The element $\varepsilon(\alpha(x))$ (resp.,
$\varepsilon(\beta(x))$) is called the \textit{left}
 (resp., \textit{right unit}) of $x; x^{-1}$ is called the
\emph{inverse} of $x$.

For each $ u\in G_{0} $, the set $ \alpha^{-1}(u)$ (resp., $
\beta^{-1}(u)$) is called \textit{$\alpha-$fibre} (resp.,
\textit{$\beta-$fibre}) at $ u$. The map $ (\alpha, \beta): G \to
G_{0}\times G_{0} $ defined by $ (\alpha , \beta)(x):= (\alpha(x),
\beta(x)),~(\forall)~ x\in G $ is called the {\it anchor map} of $
G.$

A groupoid is {\it transitive}, if its anchor map is surjective.
In the case when, $ G$ is transitive, then the isotropy groups $
G(u),~(\forall)~u\in G_{0}~$  are  groups isomorphes.

For any $ u\in G_{0},$ the set $ G(u):= \alpha^{-1}(u)\cap
\beta^{-1}(u)$ is a group under the restriction of the
multiplication, called the {\it isotropy group at} $ u $ of the
groupoid $ (G, G_{0}). $

If $(G, \alpha, \beta, m, \varepsilon, i, G_{0} )$ is a groupoid
such that  $G_{0} \subseteq G$ and $\varepsilon:G_{0}\to G$ is the
inclusion, then we say that $(G, \alpha, \beta, m, i, G_{0} )$ is
a {\it $G_{0}-$groupoid}.

A {\it group bundle}, is a groupoid $ (G, G_{0}) $ such that $
\alpha(x) = \beta(x) $ for each $ x\in G. $

If $ (G, \alpha, \beta, G_{0})~$ is a groupoid, then $ Is(G) = \{
x\in G | \alpha(x) = \beta(x) \}$ is a group bundle, called the
{\it isotropy group bundle} of $ G.$

\markboth{Gheorghe Ivan}{On transitive group$-$groupoids}

Some basic properties of groupoids  follow directly from
Definition 2.1. For instance, if $ (G,\alpha, \beta , m, \e, i,
G_{0} ) $ is a groupoid, then (\cite{ivan99}):

$(i)~~~ \alpha(x y) = \alpha(x)~~~\hbox{and}~~~\beta(x
y)=\beta(y),~~~(\forall)~(x,y)\in G_{(2)};$

$(ii)~~ \alpha( x^{-1}) =\beta (x)~~~\hbox{and}~~~\beta(x^{-1})
=\alpha(x),~~ (\forall) x\in G;$

$(iii)~ \varepsilon(u)\cdot\varepsilon(u)=\varepsilon(u),~~~
(\varepsilon(u))^{-1}=\varepsilon(u), ~~~(\forall) u\in G_{0};$

$(iv)~~ $  if $~(x,y)\in G_{(2)},$ then $~ (y^{-1},x^{-1})\in
G_{(2)}~ $ and $~(x\cdot y)^{-1}=y^{-1}\cdot x^{-1};$

$(v)~~~ \alpha \circ \varepsilon= \beta \circ \varepsilon= i\circ
\varepsilon= Id_{G_{0}};$

$(vi)~~ \alpha \circ i = \beta,~~~ \beta \circ i = \alpha ~~
\hbox{and}~~ i \circ i = Id_{G}.$
\begin{defn}
{\rm (\cite{mack})
Let $ (G, G_{0}) $ and $ (G^{\prime}, G_{0}^{\prime}) $ be two
groupoids. A {\it morphism of groupoids} or {\it groupoid
morphism} from $ G $ into $ G^{\prime} $ is a pair $(f, f_{0}) $
of maps, where $ f:G \to G^{\prime} $ and $ f_{0}: G_{0} \to
G_{0}^{\prime} $ such that the following conditions hold:

$(1)~~~\alpha^{\prime}\circ f = f_{0} \circ
\alpha,~~~\hbox{and}~~~\beta^{\prime}\circ f = f_{0} \circ \beta
$;

$(2)~~~f(m(x,y)) = m^{\prime}(f(x),f(y))~$ for all $~(x,y)\in
G_{(2)}.$}\hfill$\Box$
\end{defn}

If $ G_{0} = G_{0}^{\prime} $ and $ f_{0} = Id_{G_{0}}, $ we say
that $ f:( G, G_{0}) \to ( G^{\prime}, G_{0})$ is a
 {\it $G_{0}-$morphism of groupoids}. A groupoid morphism $ (f, f_{0}): (G, G_{0}) \to (G^{\prime},
G_{0}^{\prime})$ such that $f$ and $f_{0}$ are bijective maps, is
called {\it isomorphism of groupoids}.

Note that if $ (f,f_{0}): (G; G_{0}) \to ( G^{\prime};
G_{0}^{\prime})~$ is a groupoid morphism then  $ f $ and $ f_{0} $
commute with all the structure functions of $ G $ and $
G^{\prime}.$ We have also the relations (\cite{ivan99}):
\begin{equation}
f \circ \varepsilon = \varepsilon^{\prime}  \circ
f_{0}~~~\hbox{and}~~~ f \circ i = i^{\prime}  \circ
f.\label{(2.1)}
\end{equation}
\begin{Ex}
{\rm $(i)~$ A nonempty set $ G_{0} $ may be regarded to be a
groupoid over $ G_{0}, $ called the {\it null groupoid} associated
to $ G_{0}. $ For this, we take $ \alpha = \beta = \varepsilon = i
= Id_{G_{0}} $ and $ u\cdot u = u $ for all $ u\in G_{0}. $

$(ii)~$ A group $ G $ having $ e $ as unity, is a  $ \{e
\}-$groupoid with respect to structure functions: $~\alpha (x) =
\beta (x): = e, ~G_{(2)}= G\times G,~ m (x,y):= xy,~
 \varepsilon:\{e\} \to G,~\varepsilon(e):= e $ and $ i:G \to G,
~i(x):= x^{-1}$. Conversely, a groupoid with one unit is a
group.}\hfill$\Box$
\end{Ex}
\begin{Rem}
{\rm  For the groupoids $ (G, \alpha, \beta, m, \e, i, G_{0}) $
and $ (G^{\prime}, \alpha^{\prime}, \beta^{\prime}, m^{\prime},
\e^{\prime}, i^{\prime}, G_{0}^{\prime}), $ one construct the
groupoid $
(G\times G^{\prime}, G_{0}\times G_{0}^{\prime})$ with the structure functions given by:\\
$~\alpha_{G\times G^{\prime}}(x,x^{\prime}) =
(\alpha(x),\alpha^{\prime}(x^{\prime}));~ ~~\beta_{G\times
G^{\prime}}(x,x^{\prime}) =
(\beta(x),\beta^{\prime}(x^{\prime}));$\\
$m_{G\times G^{\prime}}((x,y),(x^{\prime},y^{\prime}))= ( m(x,y),
m^{\prime}(x^{\prime}),
y^{\prime}),~(\forall)~(x, y)\in G_{(2)},~(x^{\prime},y^{\prime})\in G_{(2)}^{\prime};$\\
$~\e_{G\times G^{\prime}}(u,u^{\prime}) =
(\e(u),\e^{\prime}(u^{\prime})),~(\forall)~(u, u^{\prime})\in
G_{0}\times G_{0}^{\prime}~ $ and $ ~i_{G\times
G^{\prime}}(x,x^{\prime})=( i(x), i^{\prime}(x^{\prime}))$.

This groupoid is called the {\it direct product} of $ (G, G_{0}) $
and $ (G^{\prime}, G_{0}^{\prime})$.} \hfill$\Box$
\end{Rem}

 In the sequel we present the notion of group-groupoid as given in \cite{brspen}.
 Also we give some important properties of group-groupoids
 which are established in \cite{mivan}.

A group structure on a nonempty set $G$ is regarded as an
universal algebra determined by a binary operation $\omega $, a
nullary operation $\nu$ and an unary operation $\sigma$. For this
we use the notation $(G, \omega, \nu, \sigma )$ or $(G, \omega)$.
More precisely, the group operation is $\omega: G\times G \to
G,~(x,y) \to \omega (x,y):=x\oplus y$. The unit element of $G$ is
$e$; that is $\nu : \{\lambda\} \to G,~\lambda \to \nu(\lambda):=
e $ (here $\{\lambda\}$ is a singleton). The inverse of $x\in G$
is denoted by $\bar{x}$; that is $\sigma :G \to G,~x \to \sigma
(x):=\bar{x}$.

Let $ (G, \alpha, \beta, m, \varepsilon, i, G_{0})$ be a groupoid.
We suppose that on $G$ (resp., $G_{0}$) is defined a group
structure (the unit of $G$ (resp., $G_{0}$) is denoted by $e$
(resp., $e_{0}$)).

\begin{defn}
{\rm(\cite{brspen})  A {\it group-groupoid} or {\it $~{\cal
G}-$groupoid}, is a groupoid $ (G, \alpha, \beta, m, \varepsilon,
i, G_{0}) $ such that the following conditions hold:

$(i)~~ (G, \omega, \nu, \sigma)$ and $ (G_{0}, \omega_{0},
\nu_{0}, \sigma_{0})$ are groups.

$(ii)~~$ The maps $~(\omega, \omega_{0}): (G\times G, G_{0}\times
G_{0}) \to  (G,
  G_{0}),~~(\nu, \nu_{0}): (\{\lambda \}, \{\lambda \}) \to (G, G_{0})~$ and $~(\sigma, \sigma_{0}): (G, G_{0}) \to (G,
  G_{0})~$ are groupoid morphisms.}\hfill$\Box$
\end{defn}

We shall denote a group-groupoid by $ (G, \alpha, \beta, m, i, \e,
\oplus, G_{0})$.

\begin{Prop}
{\rm (\cite{mivan})} If  $~(G, \alpha, \beta, m, i, \e, \oplus,
G_{0})$ is a group-groupoid, then:

$(i)~~~$ the multiplication $m$ and binary operation $\omega$ are
compatible, that is:\\[-0.3cm]
\begin{equation}
(x\cdot y)\oplus (z\cdot t)= (x\oplus z)\cdot (y\oplus
t),~~~(\forall) (x,y), (z,t)\in G_{(2)};\label{(2.2)}
\end{equation}

$(ii)~~$ The source and target $ \alpha, \beta : G \to G_{0} $ are
surjective group morphisms; i.e., for all $ x,y\in G $
we have: \\[-0.3cm]
\begin{equation}
\alpha(x\oplus y)= \alpha(x)\oplus
\alpha(y),~~~\alpha(e)=e_{0},~~~\alpha(\bar{x})=\overline{\alpha(x)},\label{(2.3)}
\end{equation}
\begin{equation}
\beta(x\oplus y)= \beta(x)\oplus
\beta(y),~~~\beta(e)=e_{0},~~~\beta(\bar{x})=\overline{\beta(x)};\label{(2.4)}
\end{equation}

$(iii)~$ The inclusion map $ \varepsilon: G_{0} \to G $ is an
injective group morphism; i.e., for all $ u,v\in G_{0} $
we have:\\[-0.3cm]
\begin{equation}
\varepsilon(u\oplus v)= \varepsilon(u)\oplus \varepsilon(v),~~~
\varepsilon(e_{0})=e,~~~\varepsilon(\bar{u})=\overline{\varepsilon(u)};\label{(2.5)}
\end{equation}

 $(iv)~~$ The inversion $ i : G \to G $ is a group
automorphism; i.e., for all $ x,y\in G $
we have: \\[-0.3cm]
\begin{equation}
i(x\oplus y)= i(x)\oplus i(y),~~~i(e)=e,~~~
i(\bar{x})=\overline{i(x)}; \label{(2.6)}
\end{equation}

$(v)~~~$ The multiplication $m$ and the unary operation $\sigma$
are compatible, that is:\\[-0.3cm]
\begin{equation}
\sigma(x\cdot y)= \sigma(x)\cdot \sigma(y),~~~(\forall) (x,y)\in
G_{(2)}.\label{(2.7)}
\end{equation}
\end{Prop}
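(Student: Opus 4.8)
The plan is to read off each of (i)--(v) directly from Definition 2.5 by unwinding what it means for the three pairs $(\omega,\omega_{0})$, $(\nu,\nu_{0})$, $(\sigma,\sigma_{0})$ to be groupoid morphisms, using the direct product groupoid structure on $G\times G$ from Remark 2.1, the relations in \eqref{(2.1)}, and the elementary identities $(i)$--$(vi)$ for groupoids. Apart from these, the argument is a sequence of substitutions.

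For $(i)$: in the product groupoid, $((x,z),(y,t))\in (G\times G)_{(2)}$ holds precisely when $\beta(x)=\alpha(y)$ and $\beta(z)=\alpha(t)$, i.e. when $(x,y),(z,t)\in G_{(2)}$, and then $m_{G\times G}((x,z),(y,t))=(x\cdot y,\, z\cdot t)$. Applying condition $(2)$ of Definition 2.2 to the groupoid morphism $(\omega,\omega_{0})$ at the pair $((x,z),(y,t))$ gives $\omega(x\cdot y,\, z\cdot t)=m(\omega(x,z),\omega(y,t))$, that is $(x\cdot y)\oplus(z\cdot t)=(x\oplus z)\cdot(y\oplus t)$, which is \eqref{(2.2)}. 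For $(v)$: condition $(1)$ of Definition 2.2 for $(\sigma,\sigma_{0})$ shows $\beta(\sigma(x))=\sigma_{0}(\beta(x))=\sigma_{0}(\alpha(y))=\alpha(\sigma(y))$, so $(\sigma(x),\sigma(y))\in G_{(2)}$ whenever $(x,y)\in G_{(2)}$; condition $(2)$ then yields $\sigma(x\cdot y)=\sigma(x)\cdot\sigma(y)$, which is \eqref{(2.7)}.

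For $(ii)$--$(iv)$: surjectivity of $\alpha,\beta$ and injectivity of $\varepsilon$ are built into Definition 2.1, and $i\circ i=Id_{G}$ from $(vi)$ makes $i$ bijective. Condition $(1)$ of Definition 2.2 for $(\omega,\omega_{0})$ gives $\alpha(x\oplus y)=\omega_{0}(\alpha(x),\alpha(y))=\alpha(x)\oplus\alpha(y)$ and likewise for $\beta$; the first relation in \eqref{(2.1)} applied to $(\omega,\omega_{0})$ gives $\varepsilon(u)\oplus\varepsilon(v)=\omega(\varepsilon_{G\times G}(u,v))=\varepsilon(\omega_{0}(u,v))=\varepsilon(u\oplus v)$; and the second relation in \eqref{(2.1)} applied to $(\omega,\omega_{0})$ gives $i(x)\oplus i(y)=\omega(i_{G\times G}(x,y))=i(\omega(x,y))=i(x\oplus y)$. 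Since $\alpha,\beta,\varepsilon,i$ are each defined on all of their group domains, these equalities make them semigroup homomorphisms between groups, hence group homomorphisms, so they automatically send the unit to the unit and commute with group inversion; this supplies the remaining identities $\alpha(e)=\beta(e)=e_{0}$, $\varepsilon(e_{0})=e$, $i(e)=e$, $\alpha(\bar{x})=\overline{\alpha(x)}$, $\beta(\bar{x})=\overline{\beta(x)}$, $\varepsilon(\bar{u})=\overline{\varepsilon(u)}$, $i(\bar{x})=\overline{i(x)}$ in \eqref{(2.3)}--\eqref{(2.6)}. (Alternatively, $\alpha(e)=e_{0}$, $\varepsilon(e_{0})=e$ and $i(e)=e$ follow from condition $(1)$ and \eqref{(2.1)} for $(\nu,\nu_{0})$, using $\alpha_{\{\lambda\}}=\varepsilon_{\{\lambda\}}=i_{\{\lambda\}}=Id$ on the null groupoid, and the inversion identities from $(\sigma,\sigma_{0})$.) Together with its bijectivity, this shows $i$ is a group automorphism.

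The only point that requires care is keeping the composability and domain conditions aligned when passing between $G$, the product groupoid $G\times G$, and the null groupoid $\{\lambda\}$; once those are matched up, each assertion is an immediate substitution, so I do not expect a genuine obstacle here.
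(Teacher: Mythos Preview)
Your argument is correct and mirrors the paper's own proof: both unwind condition $(2)$ of Definition~2.2 for $(\omega,\omega_{0})$ to obtain the interchange law, use condition $(1)$ and the relations \eqref{(2.1)} to show $\alpha,\beta,\varepsilon,i$ are group morphisms (whence the unit and inverse identities follow automatically), and read off $(v)$ from the fact that $(\sigma,\sigma_{0})$ is a groupoid morphism. The only slip is the reference to ``Definition~2.5'': the group-groupoid axioms you are invoking are those of Definition~2.3, while Definition~2.5 concerns morphisms of group-groupoids.
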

\begin{proof}
We present a short version of demonstration of this proposition
given in \cite{mivan}. For to prove the assertions $(i)-(iv)$ we
apply the fact that $(\omega, \omega_{0})$ is a groupoid morphism
and the relations (2.1).

$(i)~$ By hypothesis that  $ (\omega, \omega_{0})$ is a groupoid morphism, applying
 Definition 2.2(2), implies that:\\[0.1cm]
$(1)~~~\omega(m_{G\times G}((x,y),(z,t)))=m_{G}(\omega(x,z),
\omega(y,t)),~ (\forall)~(x,y), (z,t)\in G_{(2)}.$

We have\\[0.1cm]
$\omega(m_{G\times G}((x,y),(z,t)))=\omega(m_{G}(x,y),m_{G}(z,t))=
\omega(x\cdot y, z\cdot t) = (x\cdot y)\oplus (z\cdot t)~$
and\\[0.1cm]
$m_{G}(\omega(x,z), \omega(y,t))= m_{G}(x\oplus z, y\oplus t)=
(x\oplus z)\cdot (y\oplus t).$

Using $(1)$ one obtains $ (x\cdot y)\oplus (z\cdot t) = (x\oplus
z)\cdot (y\oplus t),$ and $(2.2)$ holds.

$(ii)~$ For each $(x,y)\in G\times G$, we have\\[0.1cm]
$\alpha (\omega(x,y)) =\alpha (x\oplus y)~$ and $~\omega_{0}(
(\alpha\times \alpha)(x,y))=\omega_{0}(\alpha(x), \alpha (y))=
\alpha(x)\oplus \alpha (y).$

 Using the equality $\alpha\circ \omega = \omega_{0}\circ
 (\alpha\times \alpha) $ , it follows $ \alpha (x\oplus y)=\alpha(x)\oplus \alpha (y),$ and
the first relation of $(2.3)$ holds. Then $\alpha $ is a morphism
of groups. As consequence of this assertion one obtains that the
two last relations from $(2.3)$ hold. Similarly, we prove that the
relations (2.4) hold.

$(iii)~$ and $~(iv)~$ Since $(\omega, \omega_{0})$ is a groupoid
morphism, it follows $~\omega\circ (\e\times \e)= \e \circ
\omega_{0} $ and $~ i\circ \omega = \omega\circ (i\times i).$
Applying the same procedure as in demonstration of $(ii)$, we
shall see that the relations $(2.5)$ and $(2.6)$ hold.

$(v)~$ Since $~(\sigma, \sigma_{0}) $ is a groupoid morphism,
for all $(x,y)\in G_{(2)}$ we have\\[0.1cm]
$\sigma(m(x,y))=m(\sigma(x), \sigma(y))$; i.e., $\sigma(x\cdot
y)=\sigma(x)\cdot \sigma(y).$
 Hence  $(2.7)$ holds.
\end{proof}

The relation $(2.2)$ (resp., $(2.7)$) is called the {\it
interchange law} between groupoid multiplication $m$ and group
operation $\omega$ (resp., $\sigma$).

 We say that the group-groupoid $ (G, \alpha, \beta, m, i, \e,
\oplus,  G_{0})$ is a {\it commutative group-groupoid}, if the
groups $G$ and $G_{0}$ are commutative.

Let us we give another definition for the notion of
group-groupoid. In the paper \cite{mivan} has proved that the
Definition $2.3$ and Definition $2.4$ are equivalent.
\begin{defn}
{\rm (\cite{mivan}) A {\it group-groupoid} is a groupoid $
(G,\alpha, \beta, m, i, \e, G_{0})$ such that the following
conditions are satisfied:

$(i)~~ (G, \oplus)$ and $ (G_{0}, \oplus )$ are groups;

$(ii)~~ $ the structure functions $\alpha, \beta : ( G, \oplus
)\to ( G_{0}, \oplus ),~\e : ( G_{0}, \oplus )\to ( G, \oplus )~$
and $~i : ( G, \oplus )\to ( G, \oplus )$ are morphisms of groups;

$(iii)~ $ the interchange law $(2.2)$ between  the operations $m$
and  $\oplus$ holds.} \hfill$\Box$
\end{defn}

If in Definition 2.4, we consider $ G_{0}\subseteq G$ and $
\varepsilon: G_{0}\to G$ is the inclusion map, then $ (G, \alpha,
\beta, m, i, \oplus, G_{0})$ is a group-groupoid. In this case, we
will say that {\it $(G, G_{0})$ is a group$-G_{0}-$groupoid}.

\begin{Ex}
{\rm $(i)~$ Let $G_{0}$ be a group. Then $ G_{0}$ has a structure
of null groupoid over $G_{0}$ (see Example 2.1(i)). We have that
$G= G_{0}$ and the maps $ \alpha, \beta,\varepsilon, i $ are
morphisms of groups. It is easy to prove that the interchange law
$(2.2)$ is verified. Then $G_{0}$ is a group-groupoid, called the
{\it null group-groupoid} associated to group $G_{0}$.

$(ii)~$ A commutative  group $(G, \oplus)$ having $\{e\}$ as unity
may be considered to be a $\{ e\}-$groupoid (see Example 2.1(ii)).
In this case we have that the groupoid multiplication and the
group operation coincide, that is $m =\oplus$. We have that $(G,
\oplus)$ and $G_{0}=\{e\}$ are groups. It is easy to see that
$\alpha, \beta, \e $ and $i$ are morphisms of groups. It remains
to verify that the interchange law $(2.2)$ holds. Indeed, for
$x,y,z,t \in G$ we have $(x\oplus y)\oplus (z\oplus t)= (x\oplus z
)\oplus (y \oplus t),$ since the operation $\oplus$ is associative
and commutative. Hence $(G, \alpha, \beta, m, \e, i, \oplus,
\{e\})$ is a group-groupoid called the {\it group-groupoid with a
single unit} associated to commutative group $(G, \oplus)$.
 Therefore, {\it each commutative group $ G $ can be regarded as a commutative group
 $-\{e\}-$groupoid}.}\hfill$\Box$
\end{Ex}
\begin{defn}
{\rm Let $ ( G_{j}, \alpha_{j}, \beta_{j}, m_{j}, \varepsilon_{j},
i_{j}, \oplus_{j}, G_{j,0} ),~ j=1,2$ be two group-groupoids. A
groupoid morphism  $ (f, f_{0}): (G_{1}, G_{1,0})\to (G_{2},
G_{2,0}) $ such  that $ f $ and $ f_{0}$ are group morphisms, is
called  {\it group-groupoid morphism} or {\it morphism of
group-groupoids}.

A group-groupoid morphism of the form $ (f, Id_{G_{1,0}}): (G_{1},
G_{1,0})\to (G_{2}, G_{1,0}) $ is called  {\it $ G_{1,0}-$morphism
of group-groupoids}.  It is denoted by $ f: G_{1} \to
G_{2}$.}\hfill$\Box$
\end{defn}

\section{ Transitive group-groupoids}

Let $(G, \alpha, \beta, m, \e, i, \oplus, G_{0})$ be a
group-groupoid. We say that it is {\it transitive}, if its anchor
map is surjective.

\begin{Ex}
 {\rm  The Cartesian product $ G:= X \times X~$ has a structure
of groupoid over the set $ X $  by taking the structure functions
as follows: $ \overline{\alpha}(x,y):= x,~\overline{\beta}(x,y):=
y;~ (x,y) $ and $ (y^{\prime},z) $ are composable
 iff $ y^{\prime} = y $ and we define
$ (x,y)\cdot (y,z):=(x,z) $; the inclusion map $
\overline{\varepsilon}:X \to X\times X$ is given by
$\overline{\varepsilon}(x):=(x,x)$ and the inverse of $~(x,y)~$ is
defined by $ (x,y)^{-1}:= (y,x). $ This is called the {\it pair
groupoid} associated to set $X$. Its unit set is $ \e(X)=\{
(x,x)\in X\times X | x\in X\}.$

We suppose that $(X,\oplus)$ is a group. Then  $ X\times X$ is a
group endowed with operation  $ (x_{1}, x_{2}) \oplus (y_{1},
y_{2}):=( x_{1}\oplus y_{1}, x_{2}\oplus y_{2}),~$ for all $
x_{1}, x_{2}, y_{1}, y_{2}\in G$. It is easy to check that
$~\overline{\alpha}, \overline{\beta},~\overline{\varepsilon} $
and $~\overline{i}~$ are group morphisms. For to prove that the
condition $(iii)$ from Definition 2.4 is verified, we consider $
x=(x_{1}, x_{2}), y=(y_{1}, y_{2}), z=(z_{1}, z_{2}), t=(t_{1},
t_{2}) $ from $ X\times X$ such that
$\overline{\beta}(x)=\overline{\alpha}(y) $ and
$\overline{\beta}(z)= \overline{\alpha}(t).$ Then $ x_{2}=y_{1} $
and $ z_{2}=t_{1}$. It follows $y=(x_{2}, y_{2}),~t=(z_{2},
t_{2}), x\cdot y = (x_{1}, y_{2}) $ and $ z\cdot t = (z_{1},
t_{2}). $ We have $~ (x\cdot y)\oplus (z\cdot t) = (x_{1},
y_{2})\oplus (z_{1},
t_{2})= (x_{1}\oplus z_{1}, y_{2}\oplus t_{2})~ $ and\\[0.1cm]
$ (x\oplus z)\cdot (y\oplus t)=  (x_{1}\oplus z_{1}, x_{2}\oplus
z_{2})\cdot (x_{2}\oplus z_{2}, y_{2}\oplus t_{2})=(x_{1}\oplus
z_{1}, y_{2}\oplus t_{2}).~$ Then, $ (x\cdot y)\oplus (z\cdot
t)=(x\oplus z)\cdot (y\oplus t)$ and so the relation $(2.2)$
holds. Hence $X\times X$ is a group-groupoid called the  {\it
group-pair groupoid} associated to group $X$.}
 \hfill$\b$
\end{Ex}
\begin{Prop}
The group-pair groupoid $(X\times X, \overline{\alpha},
\overline{\beta}, \overline{m},\overline{\e},\overline{i}, X)$
associated to group $(X, \oplus)$ is transitive.
\end{Prop}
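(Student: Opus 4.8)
The plan is to reduce the statement to the surjectivity of a single map and then observe that that map is essentially the identity. Recall that, by the definition of transitivity stated just before Example~3.1, the group-pair groupoid $(X\times X, \overline{\alpha}, \overline{\beta}, \overline{m},\overline{\e},\overline{i}, X)$ is transitive precisely when its anchor map $(\overline{\alpha},\overline{\beta}): X\times X \to X\times X$ is surjective. Here one must keep the two copies of $X\times X$ distinct: the domain is the underlying set $G = X\times X$ of the groupoid, while the codomain is the Cartesian square of the base $X$. Note that the group structure $\oplus$ on $X$ plays no role in this argument, since transitivity is a purely groupoid-theoretic property; the fact that $X\times X$ carries a group-groupoid structure is already supplied by Example~3.1.

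Next I would compute the anchor map directly from the structure functions of the pair groupoid given in Example~3.1. For an arbitrary element $(x,y)\in G = X\times X$ one has $\overline{\alpha}(x,y) = x$ and $\overline{\beta}(x,y) = y$, hence
\[
(\overline{\alpha},\overline{\beta})(x,y) = \bigl(\overline{\alpha}(x,y),\overline{\beta}(x,y)\bigr) = (x,y).
\]
Thus $(\overline{\alpha},\overline{\beta}) = Id_{X\times X}$.

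To conclude: given any pair $(u,v)$ in the base square $X\times X$, the element $(u,v)\in G$ satisfies $(\overline{\alpha},\overline{\beta})(u,v) = (u,v)$, so the anchor map is surjective (indeed bijective), and therefore the group-pair groupoid associated to the group $(X,\oplus)$ is transitive. There is no genuine obstacle here; the only point requiring a little care is the bookkeeping of the two copies of $X\times X$. As a consistency check one may also note that the isotropy group at each $u\in X$ is $(X\times X)(u)=\overline{\alpha}^{-1}(u)\cap\overline{\beta}^{-1}(u)=\{(u,u)\}$, the trivial group, in agreement with the general fact that the isotropy groups of a transitive groupoid are mutually isomorphic.
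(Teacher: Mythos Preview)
Your proof is correct and follows essentially the same approach as the paper: both show that for any $(u,v)$ in the base $X\times X$ the element $(u,v)\in G$ is a preimage under the anchor map, hence $(\overline{\alpha},\overline{\beta})$ is surjective. Your additional observation that $(\overline{\alpha},\overline{\beta})=Id_{X\times X}$ and the remark on isotropy groups are valid extras, but the core argument is identical to the paper's.
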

\begin{proof}
The anchor  $(\overline{\alpha} ,\overline{\beta}): X\times X \to
X\times X $ is surjective, since for $(u, v)\in X\times X$ there
exists $x =(u,v)\in X\times X $ such that $ (\overline{\alpha},
\overline{\beta})(x)= (\overline{\alpha}(u,v),
\overline{\beta}(u,v))=(u, v).$
\end{proof}
\begin{Ex}
{\rm {\bf The modular group-groupoid ${\bf Z}_{n}^{2}(a)$}. Let
${\bf Z}_{n}$ be the additive group of integers modulo $n$ and let
$a\in {\bf Z}_{n}$ such that $ a^{2} = 1$.

We consider the sets  $ G:= {\bf Z}_{n}\times {\bf Z}_{n}$ and
$G_{0}:= \{ (x, a x) \in {\bf Z}_{n}\times {\bf Z}_{n} | (\forall)
x\in {\bf Z}_{n}\}$. We have that $G$ is a commutative group and
$G_{0}\subseteq G$ is a subgroup.

We have $G_{0}:= \{(x, a x)\in {\bf Z}_{n}\times {\bf Z}_{n} |
(\forall) x\in {\bf Z}_{n} \}= \{(ay, y)\in {\bf Z}_{n}\times {\bf
Z}_{n} | (\forall) y\in {\bf Z}_{n}\}$.

The functions  $\alpha,\beta: G\longrightarrow G_{0}, ~i:
G\longrightarrow G$ and $m : G_{(2)} \to G$ where\\
 $G_{(2)}:=\{((x,y),(ay,z))\in G\times G~|~x,y,z\in {\bf
Z}_{n}\}\longrightarrow G$ are given by:\\[-0.3cm]
\[
\alpha(x,y):=(x,ax);~~ \beta(x,y):=(ay,y);~~i(x,y):=(ay,ax);~~
(x,y)\cdot(ay,z):=(x,z).
\]

It is easy to verify that $ \alpha, \beta, m, i $ determine on $G$
a structure of a $G_{0}-$groupoid. Also, $\alpha, \beta$ and $i$
are group morphisms. Therefore, the conditions $(i)$ and $(ii)$
from the Definition 2.4 hold.

We consider $x,y, z,t \in G$ such that $(x,y), (z,t)\in G_{(2)}.$
Then $ x=(x_{0}, x_{1}),~ y=(ax_{1}, y_{1}),~
z=(z_{0}, z_{1})$ and $ t=(a z_{1}, t_{1}).$ We have\\[0.1cm]
$(x\cdot y) + (z\cdot t) = (x_{0}, y_{1}) + ( z_{0}, t_{1}) =
(x_{0}+
z_{0}, y_{1}+ t_{1})~$ and\\[0.1cm]
$(x + z)\cdot (y+t) = (x_{0}+ z_{0}, x_{1}+ z_{1})\cdot ( a(x_{1}+
z_{1}),y_{1}+ t_{1})= (x_{0}+ z_{0}, y_{1}+ t_{1}).$

Hence, $(x\cdot y) + (z\cdot t) = (x + y)\cdot (z+t)$ and the
interchange law $(2.2)$ holds. Therefore, $(G, \alpha, \beta, m,
i, G_{0})$ is a commutative group$-G_{0}-$groupoid, called the
{\it modular group-groupoid of type $a$} and it is denoted by $
{\bf Z}_{n}^{2}(a)$.

 The group-groupoid $ {\bf
Z}_{n}^{2}(a)$ is transitive. Indeed, the anchor $(\alpha,
\beta):G\rightarrow G_{0}\times G_{0}$ is given by $ (\alpha,
\beta)(x,y):=((x, ax), (ay, y)),~(\forall) x, y\in
\mathbb{Z}_{n}.$ Let $(u,v)\in G_{0}\times G_{0}$ with $ u=(x_{0},
a x_{0})$ and $v=(y_{0}, ay_{0})$. Taking $x=(x_{0}, ay_{0})\in
G$, we have $(\alpha, \beta )(x)=(\alpha, \beta )(x_{0}, a
y_{0})=((x_{0}, a x_{0}),( a^{2} y_{0}, a y_{0}))=(u,v).$ Hence,
$(\alpha, \beta)$ is surjective .}\hfill$\Box$
\end{Ex}

Let us we shall give some ways of building up new transitive
group-groupoids.

{\bf $1.~$ Direct product of two group-groupoids}. Let given the
group-groupoids $ (G, \alpha, \beta, m, \e, i, \oplus_{G}, G_{0})
$ and $ (G^{\prime}, \alpha^{\prime}, \beta^{\prime}, m^{\prime},
\e^{\prime}, i^{\prime}, \oplus_{G^{\prime}}, G_{0}^{\prime}).$
Consider the direct product  $ (G\times G^{\prime},
\alpha_{G\times G^{\prime}}, \beta_{G\times G^{\prime}},
m_{G\times G^{\prime}}, \e_{G\times G^{\prime}}, i_{G\times
G^{\prime}}, G_{0}\times G_{0}^{\prime} )~$ of the groupoids $(G,
G_{0}) $ and $ (G^{\prime}, G_{0}^{\prime})$ (see Remark 2.1). On
$ G\times G^{\prime}$ and $ G_{0}\times G_{0}^{\prime}$ we
introduce the usual group operations. These operations are defined by\\
$(x, x^{\prime})\oplus_{G\times G^{\prime}} (y, y^{\prime}):= ( x
\oplus_{G} y, x^{\prime} \oplus_{G^{\prime}} y^{\prime} ), ~ (\forall) x, y\in G,~ x^{\prime}, y^{\prime}\in G^{\prime}$  ~~\hbox{and}\\
$(u_{1}, u_{1}^{\prime})\oplus_{G_{0}\times G_{0}^{\prime}}
(u_{2}, u_{2}^{\prime}):= ( u_{1} \oplus_{G_{0}} u_{2},
u_{1}^{\prime} \oplus_{G_{0}^{\prime}} u_{2}^{\prime} ), ~
(\forall) u_{1}, u_{2}\in G_{0},~ u_{1}^{\prime},
u_{2}^{\prime}\in G_{0}^{\prime}$.

By a direct computation we prove that the conditions from
Definition 2.4 are satisfied. Then  $ (G\times G^{\prime},
\alpha_{G\times G^{\prime}}, \beta_{G\times G^{\prime}},
m_{G\times G^{\prime}}, \e_{G\times G_{0}^{\prime}}, i_{G\times
G^{\prime}}, \oplus_{G\times G^{\prime}}, G_{0}\times
G_{0}^{\prime})$ is a group-groupoid, called the {\it direct
product of group- groupoids} $(G,G_{0})$ and
$(G^{\prime},G_{0}^{\prime})$.

The projections $ pr_{G} : G\times G^{\prime} \to G$ and $
pr_{G^{\prime}} : G\times G^{\prime} \to G^{\prime}$ are morphisms
of group-groupoids, called the {\it canonical projections} of  $
G\times G^{\prime}$ onto $G$ and $ G^{\prime}$, respectively.

It is easy to prove that: {\it  the direct product of two
transitive group-groupoids is also a transitive
group-groupoid}.\hfill$\Box$

{\bf $2.~$ The group-groupoid associated to an epimorphism of
commutative groups}. Let $(E, +)$ and  $(F, +)$ be two commutative
groups, and $\pi :E \to F$ be an group epimorphism.

The set $ G_{\pi} = E\times_{\pi}E = \{(x,y)\in E\times E | \pi(x)
= \pi(y)\} $ has a structure of groupoid over $ E $ for which the
structure functions $ \alpha_{\pi}, \beta_{\pi}, m_{\pi}, \e_{\pi}
$ and $ i_{\pi} $ are the restrictions to $ G_{\pi} $ of the
structure functions of the pair groupoid $ E\times E $ over $ E.$
More precisely, the maps $\alpha_{\pi}, \beta _{\pi}: G_{\pi} \to
E,~m_{\pi}:(G_{\pi})_{(2)}\to E,~ \varepsilon_{\pi}: E\to
G_{\pi}~$ and $~i _{\pi}: G_{\pi}\to G_{\pi}$ are defined by
$~\alpha_{\pi}(x,y):= x,~~\beta_{\pi} (x,y):=
y,~~\varepsilon_{\pi}(x):= (x,x),~~m_{\pi}((x,y),(y,z)):= (x,z)~$
and $i_{\pi}(x,y):= (y,x),$ for all $x, y, z\in E.$

Clearly, $G_{\pi}$ has a structure of commutative group determined
by the group operation $+_{\pi}: G_{\pi}\times G_{\pi} \to
G_{\pi}$ given by $(x_{1}, y_{1})+_{\pi}(x_{2}, y_{2})= (x_{1}+
x_{2}, y_{1}+y_{2})$ for all $ (x_{1}, y_{1}), (x_{2}, y_{2})\in
G_{\pi}.$

It is easy to check that $(G_{\pi}, \alpha_{\pi}, \beta_{\pi}, m
_{\pi}, \varepsilon_{\pi}, i_{\pi}, E)$ verifies the conditions
from Definition 2.4. Then $G_{\pi}$ is a commutative
group-groupoid over $E$, called the {\it group-groupoid associated
to $\pi: E \to F.$} Also, it is a transitive.\hfill$\Box$

{\bf $3.~$ Trivial group-groupoid  ${\cal TGG}(A,B) $}. Let
$(A,\oplus_{A})$ and $(B,\oplus_{B})$ be two commutative groups.
The unit element of the group $A$ (resp., $B$) is denoted by
$e_{A}$ (resp., $e_{B}$). The inverse of $a\in A$
 (resp. $b\in B$) is denoted by $\bar{a}_{A}$ (resp., $\bar{b}_{B}$).

The Cartesian product $ B\times A \times B $ has a natural
structure of group. We introduce on $ {\cal G}:= B\times A \times
B $ the structure functions $ \alpha_{\cal G}, \beta_{\cal G},
m_{\cal G}, \varepsilon_{\cal G} $ and $ i_{\cal G}$ as follows.

For all $(b_{1}, a, b_{2})\in {\cal G} $ and $b\in B$, the maps $
\alpha_{\cal G},~\beta_{\cal G}: {\cal G} \to B,~
\varepsilon_{\cal G}: B \to {\cal G} $ and $ i_{\cal G}: {\cal
G}\to {\cal G}$ are defined by: $~\alpha_{\cal G}(b_{1}, a,
b_{2}):= b_{1},~  \beta_{\cal G}(b_{1}, a, b_{2}):= b_{2},~
\varepsilon_{\cal G}(b)= (b, e_{A}, b),~ i_{\cal G}(b_{1}, a,
b_{2}):=( b_{2}, \overline{a}_{A}, b_{1} )$. Then $ {\cal G}_{(2)}
= \{ ( (b_{1}, a_{1}, b_{2}), (b_{2}^{\prime}, a_{2}, b_{3}) ) \in
{\cal G}\times {\cal G} |  b_{2} = b_{2}^{\prime} \}$ and the
partially  multiplication $m_{\cal G}: {\cal G}_{(2)} \to {\cal G}
$ is given by\\[0.1cm]
$~~~~~~~~~~~~~~~~~~(b_{1}, a_{1}, b_{2})\cdot_{\cal G} (b_{2},
a_{2}, b_{3}):= ( b_{1}, a_{1}\oplus_{A} a_{2}, b_{3})$.

 It is easy to verify that the conditions of Definition
2.1 are satisfied. Then\\
 $( {\cal G}, \alpha_{\cal G},
\beta_{\cal G}, m_{\cal G}, \varepsilon _{\cal G}, i_{\cal G}, B)
$ is a groupoid. Also, the condition $(i)$ from Definition 2.4 is
 verified.

 Let now two elements $ x,y\in {\cal G}$,
 where $ x = (b_{1}, a_{1}, b_{2}) $ and $ y = (b_{3}, a_{2}, b_{4})
 $. We have\\[0.1cm]
$\alpha_{\cal G}(x \oplus_{\cal G} y)= \alpha_{\cal
G}(b_{1}\oplus_{B} b_{3}, a_{1}\oplus_{A} a_{2}, b_{2}\oplus_{B}
b_{4})= b_{1}\oplus_{B} b_{3}= \alpha_{\cal G}(x) \oplus_{B}
\alpha_{\cal G}(y).$ Then $ \alpha_{\cal G}$ is a group morphism.
Similarly we prove that $ \beta_{\cal G}$ is a group morphism.

For all $b_{1}, b_{2}\in B$ we have $ \e_{\cal G}(b_{1} \oplus_{B}
b_{2})= ( b_{1} \oplus_{B} b_{2}, e_{A} , b_{1} \oplus_{B}
b_{2})~$ and\\[0.1cm]
$\e_{\cal G}(b_{1})\oplus_{\cal G} \e_{\cal G}(b_{2})= (b_{1},
e_{A}, b_{1})\oplus_{\cal G} (b_{2}, e_{A}, b_{2})= ( b_{1}
\oplus_{B} b_{2}, e_{A} , b_{1} \oplus_{B} b_{2}).$  It follows
that $ \e_{\cal G}$ is a group morphism.

For $ x = (b_{1}, a_{1}, b_{2})\in {\cal G} $ and $ y = (b_{3},
a_{2}, b_{4}) \in {\cal G}$, we have\\[0.1cm]
$ i_{\cal G}(x \oplus_{\cal G} y)= i_{\cal G}( b_{1}\oplus_{B}
b_{3}, a_{1}\oplus_{A} a_{2}, b_{2}\oplus_{B} b_{4})= (
b_{2}\oplus_{B} b_{4}, \overline{a_{1}\oplus_{A} a_{2}},
b_{1}\oplus_{B} b_{3}) =$\\[0.1cm]
$=( b_{2},\overline{a_{1}}, b_{1}) \oplus_{\cal G} ( b_{4},
\overline{a_{2}}, b_{3})= i_{\cal G}(x) \oplus_{\cal G} i_{\cal
G}(y). $ It follows that $ i_{\cal G}$ is a group morphism.

Hence the conditions (ii) from Definition 2.4 hold.

For to verify the interchange law $ (2.2))$, we consider $ x, y,
z, t \in {\cal G} $ such that\\ $(x,y)\in {\cal G}_{(2)} $ and
$(z,t)\in {\cal G}_{(2)} $. Then $ x = (b_{1}, a_{1}, b_{2} ), y =
( b_{2}, a_{2}, b_{3}), z = (b_{4}, a_{3}, b_{5}) $ and $ t =
(b_{5}, a_{4}, b_{6}) $. We have

 $(1)~~~ (x\cdot_{\cal G} y) \oplus_{\cal G}(z \cdot_{\cal G} t)=
 (b_{1}, a_{1}\oplus_{A} a_{2}, b_{3})\oplus_{\cal G}(b_{4}, a_{3}\oplus_{A} a_{4},
 b_{6})=$\\[0.1cm]
 $ ( b_{1}\oplus_{B} b_{4}, a_{1}\oplus_{A} a_{2}\oplus_{A} a_{3}\oplus_{A}
 a_{4}, b_{3}\oplus_{B} b_{6})~$ and

$(2)~~~ (x\oplus_{\cal G} z)\cdot_{\cal G}(y \oplus_{\cal G} t)= (
b_{1}\oplus_{B} b_{4}, a_{1}\oplus_{A} a_{3}, b_{2}\oplus_{B}
b_{5} )\cdot_{\cal G} ( b_{2}\oplus_{B} b_{5}, a_{2}\oplus_{A}
a_{4}, b_{3}\oplus_{B} b_{6})=$\\[0.1cm]
$= ( b_{1}\oplus_{B} b_{4}, a_{1}\oplus_{A} a_{3}\oplus_{A}
a_{2}\oplus_{A} a_{4}, b_{3}\oplus_{B} b_{6}).$

Using $(1)$ and $(2)$ we obtain $(x\cdot_{\cal G} y) \oplus_{\cal
G}(z \cdot_{\cal G} t)= (x\oplus_{\cal G} z)\cdot_{\cal G}(y
\oplus_{\cal G} t),$ since the operation $\oplus_{A}$ is
commutative, and the relation $(2.2)$ holds. Hence $ {\cal G}: =
B\times A\times B $ is a commutative group-groupoid over $B$. Its
set of units is $\e(B)=\{(b,e_{A},b)\in {\cal G} | b\in B\}.$

The commutative group-groupoid $ ( {\cal G}:= B\times A\times B,
\alpha_{\cal G}, \beta_{\cal G}, \cdot_{\cal G}, \e_{\cal G},
i_{\cal G}, \oplus_{\cal G}, B) $ is called the {\it trivial
group-groupoid} associated to pair of commutative groups  $(A, B)$
. This commutative group-groupoid is denoted by $ {\cal
TGG}(A,B)$. The isotropy group at $b\in B$ is $~{\cal G}(b) = \{
(b,a,b) | a\in A\} $ which identify with the group $( A,
\oplus_{A} )$.

\begin{Prop}
Let $(A,B)$ and $(A^{\prime}, B^{\prime})$ be two pairs of
commutative groups and $ \theta: A \to A^{\prime}~$ and
$~\theta_{0}: B \to B^{\prime}$ be two group morphisms. Then:

$(i)~$ the trivial group-groupoid $(B\times A\times B, B)$ is
transitive.

$(ii)~ (\theta_{0} \times \theta \times \theta_{0}, \theta_{0}):
(B\times A\times B, B ) \to (B^{\prime}\times A^{\prime}\times
B^{\prime}, B^{\prime}),~$ is a morphism of group-groupoids.
\end{Prop}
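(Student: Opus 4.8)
The plan is to treat the two parts separately, each being a direct verification against the relevant definitions. For $(i)$ I would argue straight from the explicit description of the anchor of $\mathcal{TGG}(A,B)$; for $(ii)$ I would check the two axioms of Definition 2.2 together with the requirement that both components be group morphisms, invoking the remark (after Definition 2.2) that a groupoid morphism automatically commutes with $\varepsilon_{\cal G}$ and $i_{\cal G}$, so that those need not be verified by hand.

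For $(i)$, recall that the anchor map of $({\cal G}=B\times A\times B, B)$ is $(\alpha_{\cal G},\beta_{\cal G})\colon {\cal G}\to B\times B$, $(b_{1},a,b_{2})\mapsto (b_{1},b_{2})$. Given an arbitrary pair $(u,v)\in B\times B$, the element $x=(u,e_{A},v)\in {\cal G}$ satisfies $(\alpha_{\cal G},\beta_{\cal G})(x)=(u,v)$; hence the anchor is surjective and ${\cal G}$ is transitive by the definition of a transitive group-groupoid given at the beginning of Section 3.

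For $(ii)$, write $f:=\theta_{0}\times\theta\times\theta_{0}\colon B\times A\times B\to B^{\prime}\times A^{\prime}\times B^{\prime}$ and $f_{0}:=\theta_{0}$. First I would check compatibility with source and target: for $(b_{1},a,b_{2})\in{\cal G}$ one has $\alpha^{\prime}_{\cal G}(f(b_{1},a,b_{2}))=\alpha^{\prime}_{\cal G}(\theta_{0}(b_{1}),\theta(a),\theta_{0}(b_{2}))=\theta_{0}(b_{1})=f_{0}(\alpha_{\cal G}(b_{1},a,b_{2}))$, and likewise for $\beta^{\prime}_{\cal G}$. Next, compatibility with multiplication: for a composable pair $(b_{1},a_{1},b_{2})$, $(b_{2},a_{2},b_{3})$, applying $f$ to the product $(b_{1},a_{1}\oplus_{A}a_{2},b_{3})$ gives $(\theta_{0}(b_{1}),\theta(a_{1}\oplus_{A}a_{2}),\theta_{0}(b_{3}))$, and here the key (and only non-formal) step is that $\theta$ is a group morphism, so $\theta(a_{1}\oplus_{A}a_{2})=\theta(a_{1})\oplus_{A^{\prime}}\theta(a_{2})$; this coincides with $(\theta_{0}(b_{1}),\theta(a_{1}),\theta_{0}(b_{2}))\cdot_{\cal G}(\theta_{0}(b_{2}),\theta(a_{2}),\theta_{0}(b_{3}))=f(b_{1},a_{1},b_{2})\cdot_{\cal G}f(b_{2},a_{2},b_{3})$. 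Thus $(f,f_{0})$ is a groupoid morphism in the sense of Definition 2.2.

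Finally I would check that $f$ and $f_{0}$ are group morphisms: $f_{0}=\theta_{0}$ is one by hypothesis, and for $f$ one computes $f\big((b_{1},a,b_{2})\oplus_{\cal G}(b_{1}^{\prime},a^{\prime},b_{2}^{\prime})\big)=\big(\theta_{0}(b_{1}\oplus_{B}b_{1}^{\prime}),\theta(a\oplus_{A}a^{\prime}),\theta_{0}(b_{2}\oplus_{B}b_{2}^{\prime})\big)$, which equals $f(b_{1},a,b_{2})\oplus_{{\cal G}^{\prime}}f(b_{1}^{\prime},a^{\prime},b_{2}^{\prime})$ because each of $\theta_{0}$, $\theta$, $\theta_{0}$ distributes over its respective operation. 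Combining this with the previous paragraph and Definition 2.5 yields that $(f,f_{0})$ is a morphism of group-groupoids. I do not expect any genuine obstacle here: the whole argument is a bookkeeping exercise in the componentwise formulas for the structure functions and group operations of $\mathcal{TGG}(A,B)$, the only substantive inputs being that $\theta$ and $\theta_{0}$ are group morphisms.
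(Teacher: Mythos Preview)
Your proposal is correct and follows essentially the same approach as the paper's own proof: for $(i)$ you exhibit a preimage under the anchor (the paper uses a generic $(b_{1},a,b_{2})$ where you use $(u,e_{A},v)$, a cosmetic difference), and for $(ii)$ you verify Definition~2.2 by checking compatibility with source, target, and multiplication (the key step being $\theta(a_{1}\oplus_{A}a_{2})=\theta(a_{1})\oplus_{A^{\prime}}\theta(a_{2})$), then observe that $\theta_{0}\times\theta\times\theta_{0}$ is a group morphism because each factor is, concluding via Definition~2.5.
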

\begin{proof}
We denote ${\cal G}:= B\times A\times B, {\cal G}^{\prime}:=
B^{\prime}\times A^{\prime}\times B^{\prime}$ and $
\widetilde{\theta}:=\theta_{0} \times \theta \times \theta_{0}. $

 $(i)~$ The anchor  $(\alpha_{\cal G} ,\beta_{G}): {\cal
G}\longrightarrow B\times B$ is a surjective map. Indeed, for
$(b_{1}, b_{2})\in B\times B$ there exists $x =(b_{1}, a,b_{2})\in
{\cal G}$ such that $ (\alpha_{\cal G} ,\beta_{G})(x)=
(\alpha_{\cal G}(x),\beta_{\cal G}(x))= (b_{1}, b_{2}). $

$(ii)~$ We first prove that the pair  $(\widetilde{\theta},
\theta_{0}) $ verifies the conditions from Definition 2.2.
 For all $x=(b_{1}, a, b_{2})\in {\cal G}$, we have
$~(\alpha_{{\cal G}^{\prime}}\circ \widetilde{\theta})(x)=
\alpha_{{\cal G}^{\prime}}(\theta_{0}(b_{1}), \theta (a),
\theta_{0}(b_{2}))=\theta_{0}(b_{1})~$ and $~(\theta_{0}\circ
\alpha_{\cal G})(x)=\theta_{0}(\alpha_{\cal G}(b_{1}, a,
b_{2}))=\theta_{0}(b_{1}).$ Then $\alpha_{{\cal G}^{\prime}}\circ
\widetilde{\theta}=\theta_{0}\circ \alpha_{\cal G}.$ Similarly, we
prove that $\beta_{{\cal G}^{\prime}}\circ
\widetilde{\theta}=\theta_{0}\circ \beta_{\cal G}.$

We consider $x, y\in {\cal G}$ such that $(x,y)\in {\cal G}_{(2)}.
$  Then $x=(b_{1}, a_{1}, b_{2}), y=(b_{2}, a_{2}, b_{3}) $ and $
x\cdot_{\cal G} y = (b_{1}, a_{1}\oplus_{A} a_{2}, b_{3}).$ We
have $~\widetilde{\theta}(x\cdot_{\cal G} y)= (\theta_{0}(b_{1}),
\theta(a_{1}\oplus_{A} a_{2}), \theta_{0}(b_{3})) ~$ and\\[0.1cm]
$\widetilde{\theta}(x)\cdot_{\cal G^{\prime}} \widetilde{\theta}(
y)= (\theta_{0}(b_{1}), \theta(a_{1})\oplus_{A^{\prime}} \theta
(a_{2}), \theta_{0}(b_{3})).$ It follows that
$\widetilde{\theta}(x\cdot_{\cal G} y)=
\widetilde{\theta}(x)\cdot_{\cal G^{\prime}} \widetilde{\theta}(
y),$ since $ \theta(a_{1}\oplus_{A} a_{2}) =
\theta(a_{1})\oplus_{A^{\prime}} \theta(a_{2})$. Hence,
$(\widetilde{\theta}, \theta_{0}) $ is a groupoid morphism.

Since $\theta$ and $\theta_{0}$ are morphisms of groups it implies
that $\widetilde{\theta}: {\cal G} \to {\cal G}^{\prime}$ is a
group morphism. Therefore, the conditions from Definition 2.5 are
satisfied. Hence, $(\widetilde{\theta}, \theta_{0})$ is a morphism
of group-groupoids.
\end{proof}
\begin{Th}
Let $(G, \alpha, \beta, m, \e, i, \oplus, G_{0})$ be a transitive
commutative group-groupoid. If  the restriction $\beta_{e_{0}}$
 of the target map $\beta$ to $G^{e_{0}}= \alpha^{-1}(e_{0})$ is a split epimorphism of groups, then $ (G, G_{0})$
 is isomorphic to trivial group-groupoid ${\cal TGG}(G(e_{0}), G_{0}) $ associated to pair $(G(e_{0}), G_{0}).$
\end{Th}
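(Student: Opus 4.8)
The plan is to construct an explicit isomorphism of group-groupoids $(f, f_0) : {\cal TGG}(G(e_0), G_0) \to (G, G_0)$, using the splitting of $\beta_{e_0}$ to produce the "middle" copy of the isotropy group $G(e_0)$ inside $G$. Write $A := G(e_0)$ and $B := G_0$, so ${\cal TGG}(A,B) = B \times A \times B$. Let $s : G_0 \to G^{e_0}$ be a group homomorphism with $\beta_{e_0} \circ s = Id_{G_0}$; note that $s$ takes values in $\alpha^{-1}(e_0)$, so $\alpha(s(u)) = e_0$ and $\beta(s(u)) = u$ for all $u \in G_0$. Set $f_0 := Id_{G_0}$. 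For $(b_1, a, b_2) \in B \times A \times B$ define
\begin{equation}
f(b_1, a, b_2) := i(s(b_1)) \cdot a \cdot s(b_2).
\end{equation}
First I would check this is well-defined: $i(s(b_1))$ has source $\beta(s(b_1)) = b_1$ and target $\alpha(s(b_1)) = e_0$ by $(ii)$ and $(vi)$ after Definition 2.1; then $a \in G(e_0)$ has source and target $e_0$; then $s(b_2)$ has source $e_0$ and target $b_2$. Hence all three products are defined, $\alpha(f(b_1,a,b_2)) = b_1$ and $\beta(f(b_1,a,b_2)) = b_2$, which already gives $\alpha \circ f = \alpha_{\cal G}$ and $\beta \circ f = \beta_{\cal G}$, i.e. compatibility with source and target.

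Next I would verify that $(f, Id_{G_0})$ is a groupoid morphism, i.e. respects the multiplication: for composable $(b_1,a_1,b_2)\cdot_{\cal G}(b_2,a_2,b_3) = (b_1, a_1 \oplus a_2, b_3)$ one must show $f(b_1,a_1,b_2)\cdot f(b_2,a_2,b_3) = f(b_1, a_1\oplus a_2, b_3)$. Expanding the left side gives $i(s(b_1)) \cdot a_1 \cdot s(b_2) \cdot i(s(b_2)) \cdot a_2 \cdot s(b_3)$; since $s(b_2) \cdot i(s(b_2)) = s(b_2)\cdot s(b_2)^{-1} = \e(\alpha(s(b_2))) = \e(e_0)$ by $(G3)$ and this is the groupoid identity at $e_0$ which acts trivially on $a_1$ and $a_2$ (both lying over $e_0$), the product collapses to $i(s(b_1)) \cdot (a_1 \cdot a_2) \cdot s(b_3)$. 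It remains to note that inside the isotropy group $G(e_0)$ the groupoid product $a_1 \cdot a_2$ coincides with what we need; but here is the one genuinely delicate point, and I expect it to be the main obstacle: the "$a$" slot in ${\cal TGG}$ multiplies via $\oplus_A = \oplus$, whereas in $G$ the elements $a_1, a_2$ are multiplied via the groupoid operation $m$. These two operations on $G(e_0)$ must be reconciled, and this is exactly where commutativity and the interchange law $(2.2)$ enter: applying $(2.2)$ to suitable pairs (e.g. comparing $(a_1 \cdot \e(e_0)) \oplus (\e(e_0) \cdot a_2)$ with $(a_1 \oplus \e(e_0)) \cdot (\e(e_0) \oplus a_2)$, together with $\e(e_0) = e$ from $(2.5)$) shows that on the unit-fibre the two multiplications agree up to the group operation, so that $a_1 \cdot a_2 = a_1 \oplus a_2$ on $G(e_0)$. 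This is the standard Eckmann–Hilton-type collapse and I would spell it out carefully.

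Then I would check $f$ is a group homomorphism for $\oplus_{\cal G}$ and $\oplus$: $f(b_1,a_1,b_2) \oplus f(b_1',a_1',b_2') = \big(i(s(b_1)) \cdot a_1 \cdot s(b_2)\big) \oplus \big(i(s(b_1')) \cdot a_1' \cdot s(b_2')\big)$, and by the interchange law $(2.2)$ (applied twice to split the three-fold products) together with the fact that $s$, $i$ are group morphisms (equation $(2.6)$ and the hypothesis on $s$), this equals $i(s(b_1 \oplus b_1')) \cdot (a_1 \oplus a_1') \cdot s(b_2 \oplus b_2') = f(b_1 \oplus b_1', a_1 \oplus a_1', b_2 \oplus b_2')$. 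Finally I would show $f$ is bijective: injectivity follows because from $f(b_1,a,b_2)$ one recovers $b_1 = \alpha(\cdot)$, $b_2 = \beta(\cdot)$, and then $a = s(b_1) \cdot f(b_1,a,b_2) \cdot i(s(b_2))$ recovers $a$ (using $s(b_1)\cdot i(s(b_1)) = \e(e_0) = e$ again); surjectivity follows from transitivity: given $x \in G$ with $\alpha(x) = u$, $\beta(x) = v$, the element $a := s(u) \cdot x \cdot i(s(v))$ lies over $(e_0, e_0)$, hence in $G(e_0)$, and $f(u, a, v) = x$ by the same collapsing identities. Since $f_0 = Id_{G_0}$ is trivially an isomorphism, $(f, Id_{G_0})$ is the desired isomorphism of group-groupoids $\mathcal{TGG}(G(e_0), G_0) \cong (G, G_0)$. $\b$
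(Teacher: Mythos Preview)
Your proof is correct and follows essentially the same approach as the paper: use the splitting $s$ (the paper calls it $\gamma$) to build an explicit isomorphism between $(G,G_0)$ and $\mathcal{TGG}(G(e_0),G_0)$. The only real difference is the direction: the paper defines the map $\varphi:G\to G_0\times G(e_0)\times G_0$, $\varphi(x)=(\alpha(x),\,\gamma(\alpha(x))\cdot x\cdot\gamma(\beta(x))^{-1},\,\beta(x))$, and checks it is a bijective morphism of group-groupoids, while you construct its inverse $f(b_1,a,b_2)=s(b_1)^{-1}\cdot a\cdot s(b_2)$ directly; the verifications are mirror images of one another.

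One point worth noting: you explicitly isolate and prove the Eckmann--Hilton collapse $a_1\cdot a_2=a_1\oplus a_2$ on $G(e_0)$ via the interchange law $(2.2)$ and $\varepsilon(e_0)=e$. The paper uses this fact implicitly (its groupoid-morphism check treats the middle slot with $\cdot$, its group-morphism check treats it with $\oplus$, and the two are only consistent because they agree on $G(e_0)$), so your version is in fact a bit more careful on this point.
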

\begin{proof}
Since the anchor $(\alpha, \beta): G \to G_{0}\times G_{0}$ is
surjective, it follows that the restriction $\beta_{e_{0}}$ of
$\beta :G\to G_{0}$ to $G^{e_{0}}= \{a\in G | \alpha(a)=e_{0}\}$
is surjective. Let $\gamma : G_{0} \to G^{e_{0}}$ be any section
of $\beta_{e_{0}}: G^{e_{0}} \to G_{0}$; i.e., $\gamma $ is an
injective map such that $\beta_{e_{0}}\circ \gamma = Id_{G_{0}}.$
Then  $\beta(\gamma (v))= v$ for all $v\in G_{0}$. From hypothesis
that $\beta_{e_{0}}$ is a split epimorphism it implies that
$\gamma$ is a morphism between the commutative groups $G_{0}$ and
$G^{e_{0}}$. Then\\[-0.43cm]
\begin{equation}
\gamma(u\oplus v) = \gamma(u)\oplus \gamma(v),~~~ (\forall) u,v\in
G_{0}.\label{(3.1)}\\[-0.1cm]
\end{equation}

Consider the isotropy group $G(e_{0})=\{x\in G
|\alpha(x)=\beta(x)=e_{0}\}$ and the trivial group-groupoid
$(G_{0}\times G(e_{0})\times G_{0}, G_{0})$ associated to pair
$(G(e_{0}), G_{0}).$

Finally, define the map $ \varphi :G\rightarrow G_{0}\times
G(e_{0})\times G_{0}$ by the formula\\[-0.3cm]
\begin{equation}
\varphi(x) = (\alpha(x), \gamma (\alpha(x)) \cdot x\cdot \gamma
(\beta(x))^{-1}, \beta(x)), ~~~(\forall) x\in G.
\label{(3.2)}\\[-0.1cm]
\end{equation}

The product $ \gamma (\alpha(x)))\cdot x $ is defined, since $
\beta (\gamma (\alpha(x))))=\alpha(x)$. Also, $~x\cdot \gamma
(\beta(x))^{-1} $ is defined, since $
\alpha(\gamma(\beta(x))^{-1}) = \beta
(\gamma(\beta(x)))=\beta(x)$. Then $ y:=\gamma (\alpha(x)))\cdot
x\cdot \gamma (\beta(x))^{-1}$ is defined in $G$. We have $ y\in
G(e_{0})$. Indeed, $ \alpha(y)= \alpha(\gamma (\alpha(x))))= e_{0}
$ and $ \beta(y)= \beta (\gamma (\beta(x))^{-1}))= \alpha (\gamma
(\beta(x))= e_{0},$ since $ \gamma(\alpha(x)), \gamma
(\beta(x))\in G^{e_{0}}$. Hence, the map $\varphi$ is
well-defined.

Denoting $H:= G_{0}\times G(e_{0})\times G_{0}$, we shall prove
that $(\varphi, Id_{G_{0}}) : (G, G_{0}) \to (H, G_{0})$ is a
morphism of groupoids. For all $x\in G$, we have:\\[0.1cm]
$(\alpha_{H}\circ \varphi)(x)= \alpha_{H}(\alpha(x), \gamma
(\alpha(x)) \cdot x\cdot \gamma (\beta(x))^{-1},
\beta(x))=\alpha(x). $

Hence, $\alpha_{H}\circ \varphi = \alpha.$ Similarly, we verify
that $\beta_{H}\circ \varphi = \beta.$

Let $x,y\in G$ such that $(x,y)\in G_{(2)}$. Denote $z:=x\cdot y.$
Then $ \beta(x)=\alpha(y),$\\
$\alpha(z)=\alpha(x), \beta(z)=\beta(y) $ and we have \\[0.1cm]
$\varphi(z)=(\alpha(z), \gamma (\alpha(z))\cdot z \cdot \gamma
(\beta(z))^{-1}, \beta(z))=(\alpha(x), \gamma (\alpha(x)) \cdot
(x\cdot y)\cdot \gamma (\beta(y))^{-1}, \beta(y)) $ and
$~\varphi(x)\cdot_{H} \varphi(y)= (\alpha(x),\gamma
(\alpha(x))\cdot x\cdot \gamma (\alpha(y))^{-1}\cdot \gamma
(\alpha(y)) \cdot y\cdot \gamma
(\beta(y))^{-1}, \beta(y))=$\\[0.1cm]
$= (\alpha(x),\gamma (\alpha(x)) \cdot x\cdot y\cdot \gamma
(\beta(y))^{-1}, \beta(y)).$

It follows that $\varphi(x\cdot y) =\varphi(x)\cdot_{H}
\varphi(y).$ Therefore, $(\varphi, Id_{G_{0}})$ is a groupoid
morphism.

We prove that $\varphi :(G,\oplus)\to (H, \oplus_{H})$ is morphism
of groups. Applying $(3.1)$ and the fact that $\alpha$ and $\beta$
are morphisms of groups, for all $x,y\in G$  we have\\[0.1cm]
$\varphi(x\oplus y)= ( \alpha(x)\oplus \alpha(y),
\gamma(\alpha(x)\oplus \alpha(y))\cdot (x\oplus y)\cdot \gamma
(\beta(x)\oplus \beta(y))^{-1}, \beta(x)\oplus \beta(y)) =$\\[0.1cm]
$=( \alpha(x)\oplus \alpha(y), (\gamma(\alpha(x))\oplus
\gamma(\alpha(y)))\cdot (x\oplus y)\cdot  (\gamma (\beta(x)\oplus
\gamma(\beta(y))^{-1}, \beta(x)\oplus \beta(y))$.

Using the notations $ a:=\gamma(\alpha(x)),~b:=\gamma(\beta(x)),~
c:=\gamma(\alpha(y)) ~$ and $~ d:=\gamma(\beta(y))$ one obtains $
\varphi(x\oplus y)=( \alpha(x)\oplus \alpha(y), (a\oplus c)\cdot
(x\oplus y)\cdot (b \oplus d)^{-1}, \beta(x)\oplus \beta(y))$.

Applying $(2.6)$ we deduce that $(b \oplus d)^{-1}= b^{-1} \oplus
d^{-1}.$ Then\\[0.1cm]
$(1)~~~~~\varphi(x\oplus y)=( \alpha(x)\oplus \alpha(y), (a\oplus
c)\cdot (x\oplus y)\cdot (b^{-1} \oplus d^{-1}), \beta(x)\oplus
\beta(y))$.

Also, we have $~\varphi(x)= (\alpha(x), a\cdot x\cdot b^{-1},
\beta(x)),~\varphi(y)=(\alpha(y), c\cdot y\cdot d^{-1},
\beta(y))~$ and\\[0.1cm]
$(2)~~~~~\varphi(x)\oplus_{H} \varphi(y)= (\alpha(x)\oplus
\alpha(y), (a\cdot x\cdot b^{-1})\oplus (c\cdot y\cdot
d^{-1}),\beta(x)\oplus \beta(y)).$

Applying $(2.2)$ one obtains\\
$(3)~~~~~(a\cdot x\cdot b^{-1})\oplus (c\cdot y\cdot d^{-1})=
(a\oplus c)\cdot (x\oplus y)\cdot (b^{-1}\oplus d^{-1})$.

From the relations $(1)-(3)$ it follows $~\varphi(x\oplus
y)=\varphi(x)\oplus_{H} \varphi(y), $ and $\varphi $ is a morphism
of groups. Hence $(\varphi, Id_{G_{0}})$ is a morphism of
group-groupoids.

The map $\varphi$ is injective. For this, let $x,y\in G$ such that
$\varphi(x)=\varphi(y).$  Then $(\alpha(x), a\cdot x\cdot b^{-1},
\beta(x))=(\alpha(y), c\cdot y\cdot d^{-1}, \beta(y)) $ and it
implies that $\alpha(x) = \alpha(y),$\\
 $a\cdot x\cdot
b^{-1}=c\cdot y\cdot d^{-1}~$ and $~\beta(x)=\beta(y).$ We have
$a=c$ and $b=d.$ Applying the simplification law in a groupoid,
from $a\cdot x\cdot b^{-1}=a\cdot y\cdot b^{-1}~$ one obtains $
x=y.$

The map $\varphi$ is surjective. For this, let $(u, z, v)\in H $.
Then $u,v\in G_{0}$ and $z\in G(e_{0}).$ We consider the element
$x_{0}:= \gamma(u)^{-1}\cdot z\cdot \gamma(v).$ The product
$\gamma(u)^{-1}\cdot z$ is defined, since $\beta(\gamma(u)^{-1}) =
\alpha(\gamma(u))= e_{0}=\alpha(z)$. Also, $ z\cdot \gamma(v)$ is
defined, since $\alpha(\gamma(v))=e_{0}$ and $\beta(z)=e_{0}$.
Hence, $x_{0}$ is defined and $x_{0}\in G$. We have that
$\varphi(x_{0}) = (u, z, v).$ Indeed, using $(3.2)$ it follows
$\varphi(x_{0}) = (\alpha(x_{0}), \gamma(\alpha(x_{0}))\cdot
x_{0}\cdot \gamma(\beta(x_{0}))^{-1}, \beta(x_{0})).$ We have\\
 $\alpha(x_{0})= \alpha(\gamma(u)^{-1})=\ \beta(\gamma(u))= u,~
\beta(x_{0})= \beta(\gamma(v))=v $ and\\
 $\gamma(\alpha(x_{0}))\cdot x_{0}\cdot \gamma(\beta(x_{0}))^{-1}=
\gamma(u)\cdot (\gamma(u)^{-1}\cdot z\cdot \gamma(v))\cdot
\gamma(v)^{-1}=z.$ Then $ \varphi(x_{0}) = (u, z, v).$ Hence,
$\varphi$ is a surjective map.

Therefore, $(\varphi, Id_{G_{0}})$ is an isomorphism of
group-groupoids.
\end{proof}

\vspace*{0.2cm}

\hspace*{0.7cm}West University of Timi\c soara\\
\hspace*{0.7cm} Department of Mathematics\\
\hspace*{0.7cm} Bd. V. P{\^a}rvan,no.4, 300223, Timi\c soara, Romania\\
\hspace*{0.7cm}E-mail: ivan@math.uvt.ro

\end{document}